\newtheorem{defin}{Definition}[section]
\newtheorem{theorem}[defin]{Theorem}
\newtheorem{lemma}[defin]{Lemma}
\newtheorem{definition}[defin]{Definition}
\newcommand{\R}{\mathbb{R}}
\newcommand{\Ccal}{\mathcal{C}}
\DeclareMathOperator{\ILP}{ILP}
\DeclareMathOperator{\las}{las}
\DeclareMathOperator{\cone}{cone}
\DeclareMathOperator{\sym}{sym}
\begin{document}

\title{A semidefinite programming hierarchy for covering problems in
  discrete geometry}

\author{Cordian Riener}
\author{Jan Rolfes}
\author{Frank Vallentin}

\address{
  Cordian Riener,
  Department of Mathematics and Statistics, UiT The Arctic University
  of Norway,
  9037 Troms\o,
  Norway
}
\email{cordian.riener@uit.no}

\address{
  Jan Rolfes,
  Department of Mathematics, 
  Linköping University,
  SE-581 83 Linköping,
  Sweden}
\email{jan.rolfes@liu.se}

\address{
  Frank Vallentin,
  Universit\"at zu K\"oln, Department Mathematik/Informatik, Abteilung
  Mathematik, Weyertal 86-90, 50931 K\"oln,  Germany}
\email{frank.vallentin@uni-koeln.de}

\date{May 2, 2025}

\begin{abstract}
  In this paper we present a new semidefinite programming hierarchy
  for covering problems in compact metric spaces.

  Over the last years, these kind of hierarchies were developed
  primarily for geometric packing and for energy minimization
  problems; they frequently provide the best known bounds.

  Starting from a semidefinite programming hierarchy for the
  dominating set problem in graph theory, we derive the new hierarchy
  for covering and show some of its basic properties: The hierarchy
  converges in finitely many steps, but the first level collapses to
  the volume bound when the compact metric space is homogeneous.
\end{abstract}

\maketitle

\markboth{C.~Riener, J.~Rolfes, F.~Vallentin}{A semidefinite
  programming hierarchy for covering problems in discrete geometry}

\section{Introduction}
\label{sec:introduction}

Let $(X,d)$ be a compact metric space. Frequently one asks for specific
configurations of finitely many points in $X$. Various performance
measures for such finite point distributions are
investigated. Homogeneous measures, like maximizing minimal distance
between pairs of distinct points (\textit{packing}), or minimizing
energy with respect to a potential function (\textit{energy
  minimization}), are most commonly used. These homogeneous measures
have inhomogeneous counterparts: Finding a point configuration which
minimizes the maximal distance of an arbitrary point in $X$ to the
finite point distribution (\textit{covering}), or maximizing the
minimal potential energy of an arbitrary point in $X$ in the force
field given by the charges of the point configuration (\textit{max-min
  polarization}). For a more detailed overview, we refer to the book
of Borodachov, Hardin, Saff \cite{Borodachov-Hardin-Saff-2019}.

\medskip

Over the last years, semidefinite programming hierarchies, in the
spirit of the moment-SOS hierarchy in polynomial optimization, due to
Lasserre \cite{Lasserre-2001} and Parrilo \cite{Parrilo-2003}, were developed primarily for providing bounds for geometric
packing (de Laat, Vallentin
\cite{de-Laat-Vallentin-2015}) and for energy minimization problems (de Laat
\cite{de-Laat-2020}). Computing these semidefinite programming bounds
frequently yield the best known bounds.

\medskip

In this paper we want to demonstrate that this approach is also
applicable to geometric covering problems.

\medskip

We start by defining the covering problem formally. Let $X$ be a
compact metric space with distance function $d$. We denote closed
metric balls of radius $r$ and center $x$ by
$\overline{B}(x,r) = \{x' \in X : d(x,x') \leq r\}$.  A set of balls of
radius $r$ is uniquely determined by the collection of its centers
$Z$.  A set $Z$ of centers with the property
$ \bigcup_{z \in Z} \overline{B}(z,r) = X$ is called a \emph{covering} of $X$.
Since $X$ is compact, there always exists a finite covering. The
\emph{covering number} $\mathcal{N}(X,r)$ of the space $X$ and a
positive number $r$ is the smallest number of such balls with radius
$r$ one needs to cover~$X$. So,
\[
  \mathcal{N}(X,r) = \min\left\{ |Z| : Z \subseteq X, \bigcup_{z \in
      Z} \overline{B}(z,r) = X\right\}.
\]

Determining the covering number is a fundamental problem in metric
geometry (see for example the classical book by Rogers
\cite{Rogers-1964}). Among others, applications arise in the field of
compressive sensing \cite{Foucart-Rauhut-2013}, approximation theory and
machine learning \cite{Cucker-Smale-2002}, in probability theory
\cite{Ledoux-Talagrand-1991} and theoretical quantum computing
\cite{Nielsen-Chuang-2000}.

So far, upper bounds for the covering number of several specific
metric spaces are known. For a survey on this, we refer to Nasz\'odi
\cite{Naszodi-2018}. Using a greedy approach, in the spirit
Chv\'atal's greedy approximation algorithm for the set-covering
problem, Rolfes and Vallentin \cite{Rolfes-Vallentin-2018} provide
upper bounds for a wider class of compact metric spaces.  They showed
that for every $\varepsilon$ with $r/2 > \varepsilon > 0$ the inequalities 
\begin{equation}\label{Eq: ResultGreedypaper}
	\frac{1}{\omega_r} \leq \mathcal{N}(X,r) \leq
	\frac{1}{\omega_{r-\varepsilon}}
	\left(\ln\left(\frac{\omega_{r-\varepsilon}}{\omega_\varepsilon}\right)+1\right),
      \end{equation}
      holds, where $X$ is equipped with a probability measure $\omega$
      satisfying the following two conditions:
\begin{enumerate}
	\item[(a)] $\omega(\overline{B}(x,s))=\omega(\overline{B}(x',s))$ for all $x,x'\in X$, and for
	all $s \geq 0$,
	\item[(b)] $\omega(\overline{B}(x,\varepsilon))>0$ for all $x\in X$, and for all $\varepsilon>0$.
\end{enumerate}
By (a) the measure of a ball does only depend on the radius $s$ and
not on the center $x$, so we simply denote $\omega(\overline{B}(x,s))$
by $\omega_s$. These conditions are fulfilled when $X$ is a
\emph{homogeneous} space, that is, when the automorphism group acts
transitively. Then the Haar measure of the automorphism group induces
an invariant probability measure on $X$ satisfying (a) and (b).

The trivial lower bound above is known as the \emph{volume
  bound}. Stronger and more sophisticated lower bounds have not yet
been established.  The central aim of this paper is to develop a
semidefinite programming hierarchy for geometric covering problems
giving such lower bounds.

\medskip

The paper is structured as follows: 

We begin by motivating our definition of the covering hierarchy by
recalling the Lasserre hierarchy for general $0/1$ integer linear
programs in Section~\ref{sec:Lasserre-hierarchy}. We
hope that our detailed presentation, including complete proofs, of this well-known material will make our 
somewhat technical new hierarchy accessible to non-specialists.

Then we also recall
the packing hierarchy of de Laat, Vallentin
\cite{de-Laat-Vallentin-2015} in
Section~\ref{sec:packing-hierarchy}. The covering hierarchy is based
on the packing hierarchy.

We will define our covering hierarchy in
Section~\ref{sec:covering-hierarchy} and show its basic properties in
Theorem~\ref{thm:main}, which is the principal result of the paper.

In Section~\ref{sec:dual-covering-hierarchy} we derive the dual covering
hierarchy and use the dual to show that for homogeneous compact metric
spaces the first step of the hierarchy collapses to the volume bound.

\section{The Lasserre hierarchy for $0/1$ integer linear programs}
\label{sec:Lasserre-hierarchy}

Consider the following integer linear program with $0/1$-variables
\[
\ILP =  \min\{c^{\sf T} x : x \in \{0,1\}^n, \; Ax \geq b\},
\]
where the matrix $A \in \R^{m \times n}$ and the vectors $b \in \R^m$
and $c \in \R^n$ are given as input.

In the by now classical paper \cite{Lasserre-2002} Lasserre showed how
to systematically compute a sequence of stronger and stronger lower
bounds, converging in $n+1$ steps to $\ILP$, by solving semidefinite
programs of growing size.

Lasserre's original convergence proof relies on the Positivstellensatz
of Putinar~\cite{Putinar-1993} and on the flat extension theorem of
Curto, Fialkow~\cite{Curto-Fialkow-2000}. Laurent~\cite{Laurent-2003}
provided an elementary, combinatorial proof of the convergence. We
shall reproduce her arguments below because they gave the inspiration
for the semidefinite hierarchies for geometric packing and covering
problems. For good behavior we note that the results of Lasserre and
Laurent are much more general, as they apply to nonlinear (polynomial)
optimization problems with $0/1$-variables.

\subsection{Combinatorial moment and localizing matrices}

To define the Lasserre hierarchy for solving $\ILP$ we need a bit of
notation.

Set $X = [n] = \{1, \ldots, n\}$. We write $\mathcal{P}(X)$ for the
power set of $X$.  By $\mathcal{P}_t(X)$ we denote the subsets of
$\mathcal{P}(X)$ containing all $J \in \mathcal{P}(X)$ with
$|J| \leq t$. For $t = 0, \ldots, n$ and $J \subseteq X$ we define the
\emph{(truncated) characteristic vector}
$\chi^J_t \in \{0,1\}^{\mathcal{P}_t(X)}$ by
\[
  \chi^J_t(J') =
  \begin{cases}
    1 & \text{if $J' \subseteq J$,}\\
    0 & \text{otherwise.}
    \end{cases}
\]

A vector $y \in \R^{\mathcal{P}_{2t}(X)}$ defines the
\emph{(truncated) combinatorial moment matrix}
$M_t(y) \in \mathcal{S}^{\mathcal{P}_t(X)}$, which is a symmetric
matrix with $|\mathcal{P}_t(X)| = \sum_{i=0}^t \binom{n}{i}$ many rows and columns, by
\[
[M_t(y)]_{J,J'} = y(J \cup J'), \quad \text{ with } J, J' \in \mathcal{P}_t(X).
\]

The following lemma dealing with $t = n$, the final step of Lasserre's
hierarchy, is crucial. It is due to Lindstr\"om \cite{Lindstroem-1969}
and Wilf \cite{Wilf-1968}.

\begin{lemma}
  \label{lem:crucial-lemma}
  A vector $y \in \R^{\mathcal{P}(X)}$ defines a positive semidefinite
  combinatorial moment matrix $M_n(y)$ if and only if $y$ lies in the
  (simplicial) polyhedral cone generated by the characteristic vectors
  $\chi^J_n$, with $J \in \mathcal{P}(X)$. In short:
  \[
    M_n(y) \succeq 0 \Longleftrightarrow y \in \cone\{\chi^J_n : J \in
    \mathcal{P}(X)\}.
  \]
\end{lemma}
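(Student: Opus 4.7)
The plan is to prove the two implications separately, reducing both to a single algebraic identity.

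For the ``$\Leftarrow$'' direction, I would start from the elementary observation that for any $J \subseteq X$ and any $A, A' \subseteq X$ the condition $A \cup A' \subseteq J$ is equivalent to the conjunction $A \subseteq J$ and $A' \subseteq J$. Hence $\chi^J_n(A \cup A') = \chi^J_n(A)\chi^J_n(A')$ for all $A, A'$, which in matrix form reads $M_n(\chi^J_n) = \chi^J_n (\chi^J_n)^{\tp}$, a rank-one positive semidefinite matrix. Since $y \mapsto M_n(y)$ is linear, any conic combination $y = \sum_J \lambda_J \chi^J_n$ gives $M_n(y) = \sum_J \lambda_J \chi^J_n (\chi^J_n)^{\tp} \succeq 0$.

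For the ``$\Rightarrow$'' direction, I would first note that the matrix $[\chi^J_n(A)]_{A, J} = [A \subseteq J]$ is the zeta matrix of the Boolean lattice $\mathcal{P}(X)$, which is triangular with ones on the diagonal once rows and columns are ordered by cardinality. Thus $\{\chi^J_n : J \in \mathcal{P}(X)\}$ is a basis of $\R^{\mathcal{P}(X)}$, and every $y \in \R^{\mathcal{P}(X)}$ has a unique decomposition $y = \sum_J \lambda_J \chi^J_n$. Möbius inversion on the Boolean lattice gives explicitly
\[
  \lambda_J = \sum_{C \supseteq J} (-1)^{|C \setminus J|}\, y(C),
\]
so the task reduces to showing $\lambda_J \geq 0$ whenever $M_n(y) \succeq 0$.

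The key step, which I expect to be the main obstacle, is to exhibit for each $J$ a vector $v_J \in \R^{\mathcal{P}(X)}$ with the property $v_J^{\tp} M_n(y) v_J = \lambda_J$; nonnegativity of $\lambda_J$ then follows at once from $M_n(y) \succeq 0$. My candidate is
\[
  v_J(A) = \begin{cases} (-1)^{|A|} & \text{if } A \supseteq J, \\ 0 & \text{otherwise.} \end{cases}
\]
To verify the identity, I would expand
\[
  v_J^{\tp} M_n(y) v_J \;=\; \sum_{C \in \mathcal{P}(X)} y(C) \sum_{\substack{A \cup B = C \\ A,\, B \supseteq J}} (-1)^{|A|+|B|}
\]
and evaluate the inner sum using the fact that a pair $(A, B)$ is determined by choosing, independently for each $e \in [n]$, its location in one of the four disjoint regions $A \cap B$, $A \setminus B$, $B \setminus A$, $[n] \setminus (A \cup B)$. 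The constraints force each $e \in J$ into $A \cap B$ (sign $+1$) and each $e \notin C$ into the ``neither'' region (sign $+1$), while each $e \in C \setminus J$ contributes the factor $(-1)^{2} + (-1)^{1} + (-1)^{1} = -1$; if $C \not\supseteq J$ the constraints are inconsistent and the inner sum vanishes. The inner sum therefore equals $(-1)^{|C \setminus J|}$ for $C \supseteq J$ and $0$ otherwise, which matches the Möbius formula and completes the verification.
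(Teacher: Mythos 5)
Your proof is correct and follows essentially the same route as the paper: the ``$\Leftarrow$'' direction via the homomorphism property is identical, and your test vector $v_J$ is just $(-1)^{|J|}\psi^J_n$, where $\psi^J_n(J') = (-1)^{|J'\setminus J|}$ for $J\subseteq J'$ is the dual basis vector the paper sandwiches $M_n(y)$ with, so the quadratic forms agree. The only cosmetic difference is that you verify $v_J^{\tp}M_n(y)v_J=\lambda_J$ by a direct four-region expansion, while the paper gets it in one line from the dual-basis relation $(\chi^J_n)^{\tp}\psi^{J'}_n=\delta_{J,J'}$ applied to the rank-one decomposition of $M_n(y)$.
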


\begin{proof}
Sufficiency follows easily by the \emph{homomorphism property} of the
characteristic vectors, which is
\[
  \chi^J_n(J' \cup J'') =   \chi^J_n(J')  \cdot  \chi^J_n( J'') \; \text{
    for all } \; J, J', J'' \in \mathcal{P}(X).
\]
Now any $y$ with
\[
  y  = \sum_{J \in \mathcal{P}(X)} \alpha_J \chi^J_n \; \text{ and } \;
  \alpha_J \geq 0
\]
defines a positive semidefinite combinatorial moment matrix
\[
M_n(y) = \sum_{J \in \mathcal{P}(X)} \alpha_J \chi^J_n (\chi^J_n)^{\sf T}.
\]

For necessity, we first note that the characteristic vectors
$\chi^J_n$ form a basis of $\mathbb{R}^{\mathcal{P}(X)}$. Let
$\psi_n^J$ be the dual basis\footnote{In the following we only need the defining property of the dual basis. The dual basis can also explicity given by the formula $\psi^{J}_n(J') =
\begin{cases}
  (-1)^{|J' \setminus J|} & \text{if $J \subseteq J'$,}\\
  0 & \text{otherwise.}
\end{cases}
$} with respect to the standard inner
product, so that
\[
  (\chi^J_n)^{\sf T} \psi^{J'}_n = \delta_{J,J'}, \quad \text{where}
  \quad 
  \delta_{J,J'} = 
  \begin{cases} 
    1 & \text{if $J=J'$,} \\ 
    0 & \text{otherwise.}
\end{cases}
\]
Now consider a vector $y \in \R^{\mathcal{P}(X)}$ which defines a
positive semidefinite combinatorial moment matrix $M_n(y)$. Expand
  \[
    y = \sum_{J \in \mathcal{P}(X)} \alpha_J \chi^J_n,
  \]
 and so
    \[
      0 \leq (\psi^J_n)^{\sf T} M_n(y) \psi^J_n = \alpha_J. \qedhere
 \]
\end{proof}

Consider the $j$-th row of the system $Ax \geq b$ which is the
constraint $a_j^{\sf T} x \geq b_j$, where the $j$-th row $a_j$ of
matrix $A$ is viewed as a column vector. Using this information we can
define the $j$-th \emph{(truncated) localizing matrix} $M^j_{t}(y)$ of
$y \in \R^{\mathcal{P}_{2t+2}(X)}$ by
\[
  [M^j_{t}(y)]_{J,J'} = \sum_{i \in X} a_{ji} y(J \cup J' \cup \{i\}) - b_j y(J \cup J'),
\]
where $J, J' \in \mathcal{P}_t(X)$\footnote{In fact, the entries of
  $y$ which are indexed by $2t+2$-element subsets of $X$ are not used
  here. We keep them because the notation for the Lasserre hierarchy
  becomes simpler.}.

\subsection{The Lasserre hierarchy for $\ILP$}

For $t = 1, \ldots, n$, the \emph{$t$-th step of the Lasserre
  hierarchy} for $\ILP$ is the (value of the) following semidefinite
program
\[
  \begin{split}
  \las_t = \min\Big\{\sum_{i \in X} c_i y(\{i\}) \; : \; & y \in
  \R^{\mathcal{P}_{2t}(X)}, \; y(\emptyset) = 1, \\[-3ex]
  & M_t(y) \succeq 0, \;   M^j_{t-1}(y) \succeq 0 \; (j \in [m])\Big\}.
 \end{split}
\]

The Lasserre hierarchy provides a monotonically increasing sequence of
lower bounds for $\ILP$:
\[
  \ILP \geq \las_{n+1} \geq \las_{n} \geq \ldots \geq \las_1.
\]

The sequence is monotonically increasing because every feasible
solution $y \in \R^{\mathcal{P}_{2t}(X)}$ of $\las_t$ can be truncated
to a feasible solution $y' \in \R^{\mathcal{P}_{2t-2}(X)}$ of
$\las_{t-1}$ because $M_{t-1}(y')$ and $M^j_{t-2}(y')$ are
principal submatrices of $M_t(y)$ and $M^j_{t-1}(y)$.
 
We have $\ILP \geq \las_{n+1}$ because if $x^* \in \{0,1\}^n$ is an
optimal solution of $\ILP$, then $y = \chi^J_n$ with
$J = \{i \in X : x^*_i = 1\}$ is feasible for $\las_{n+1}$; Positive
semidefiniteness of $M_n(y)$ follows from the easy direction of Lemma~\ref{lem:crucial-lemma} and positive semidefiniteness of
$M_n^j(y)$ is again due to the homomorphism property of the
characteristic vector
\[
\begin{split}
[M^j_{n}(y)]_{J',J''} & \; = \; \sum_{i \in X} a_{ji} \chi^J_n(J' \cup J''
\cup \{i\}) - b_j \chi^J_n(J' \cup J'')\\
& \; = \;
\chi^J_n(J' \cup J'') \left(\sum_{i \in X} a_{ji} \chi^J_n(\{i\}) - b_j\right)\\
& \;  = \; \chi^J_n(J' \cup J'') \left(\sum_{i \in X} a_{ji} x^*_i - b_j\right),
\end{split}
\]
and hence
\[
  M^j_{n}(y) = \left(a_j^{\sf T} x^* - b_j\right) \chi^J_n
  (\chi^J_n)^{\sf T} \succeq 0,
\]
because $a_j^{\sf T} x^* \geq b_j$.

 Moreover, the Lasserre hierarchy converges to $\ILP$ in (at most)
 $n+1$ steps because the reverse inequality $\ILP \leq \las_{n+1}$
 holds. This can be seen by again applying Lemma~\ref{lem:crucial-lemma},
 which says that we can expand any feasible solution $y$ of $\las_{n+1}$ as
 $y = \sum_{J \in \mathcal{P}(X)} \alpha_J \chi^J_n$ with nonnegative
 coefficients $\alpha_J$. If $\alpha_J > 0$, then $x^J \in \{0,1\}^n$
 with $x^J_i = \chi^J_n(\{i\})$ is a feasible solution of $\ILP$, so that
 $c^{\sf T} x^J \geq \ILP$. This
 follows from the positive semidefiniteness of the localizing
 matrix. In particular, using the dual basis $\psi^J_n$,
 \[
   0 \leq (\psi^{J}_n)^{\sf T} M^j_n(y) \psi^{J}_n
= \alpha_{J} \left(\sum_{i \in X} a_{ji} \chi^J_n(\{i\}) - b_j\right)
 \]
 holds, which implies $a_j^{\sf T} x^J \geq b_j$.  The normalization
 $y(\{\emptyset\})=1$ translates into
 $\sum_{J \in \mathcal{P}(X)} \alpha_J = 1$ and therefore the
 objective value of $\las_{n+1}$ satisfies
 \[
   \sum_{i \in X} c_i y(\{i\}) =   \sum_{i \in X} c_i \sum_{J \in
     \mathcal{P}(X)} \alpha_J \chi^J_n(\{i\}) =
   \sum_{J \in \mathcal{P}(X)} \alpha_J c^{\sf T} x^J
   \geq \ILP.
 \]

\section{SDP hierarchy for geometric packing problems}
\label{sec:packing-hierarchy}

Let $X$ be a compact metric space with distance function $d$. By
$B(x,r) = \{x' \in X : d(x,x') < r\}$ we denote open metric balls with
radius $r$ and center $x$.

One important geometric parameter of $X$ and a positive $r$ is the
\emph{packing number}
\[
 \alpha(X,r) = \max\{|Z| : Z \subseteq X : B(x,r) \cap B(x',r) =
 \emptyset \text{
   for all } x, x' \in Z, x \neq x'\}.
\]

The idea behind the paper \cite{de-Laat-Vallentin-2015} of de Laat and
Vallentin is to consider $\alpha(X,r)$ as an infinite integer linear
program where every point in $X$ corresponds to a $0/1$-variable.
Then an appropriate semidefinite programming hierarchy is constructed
which can be used to determine upper bounds for $\alpha(X,r)$ and
which eventually converges to $\alpha(X,r)$. In this section we recall
the main steps of \cite{de-Laat-Vallentin-2015} because our
semidefinite hierarchy for covering builds on this packing
hierarchy. Whereas the packing hierarchy only uses moment constraints,
the covering hierarchy needs to incorporate localizing constraints.

The packing number can also be seen as the independence number of the,
potentially infinite, packing graph $P(X,r)$ with vertex set $X$ in which two
distinct vertices $x$, $x'$ are adjacent whenever
$B(x,r) \cap B(x',r) \neq \emptyset$. Generally, the \emph{independence
  number} of a graph is the maximum cardinality of an independent set,
where a subset of the vertex set is called an \emph{independent set}
if it does not contain a pair of adjacent vertices.

For a finite graph $G = (V, E)$, with vertex set
$V = \{1, \ldots, n\}$ and edge set $E$, one can formulate the
independence number of $G$ as an integer linear program with
$0/1$-variables, which is
\[
\alpha(G) =  \max\left\{\sum_{i=1}^n x_i : x \in \{0,1\}^n, x_i + x_j \leq 1 \text{
    for all } \{i,j\} \in E\right\},
\]
and apply the Lasserre hierarchy.

Laurent~\cite[Lemma 20]{Laurent-2003} observed that one can write the
Lasserre hierarchy for $\alpha(G)$ more compactly, only by considering combinatorial moment matrices, without using localizing matrices. In fact, this more compact relaxation is based on replacing the linear inequality $x_i + x_j \leq 1$ by the quadratic equality $x_i x_j = 0$ for each edge 
$\{i,j\} \in E$. This gives $y_{ij} = 0$ for $\{i,j\} \in E$ and in turn $y_I$ = 0 if $I$ contains an edge. 

To state this compact version of the Lasserre hierarchy, we
\textit{slightly change the definitions} introduced in
Section~\ref{sec:Lasserre-hierarchy}. We hope that this will not cause
confusion. The first change is that instead of working with the
complete power set of the vertex set we consider only independent
subsets. By $I_t$ we denote the set of all independent sets with at
most $t$ elements, then the $t$-th step of the Lasserre hierarchy is
\begin{equation}
  \label{eq:Lasserre-hierarchy-alpha}
\las^\alpha_t(G) = \max\left\{ \sum_{i=1}^n y(\{i\}) : y \in \R^{I_{2t}}, y\geq 0,
  \; y(\emptyset) =
      1, \; M_t(y) \succeq 0\right\},
  \end{equation}
  where we define $M_t(y) \in \R^{I_t \times I_t}$ by
  \[
    [M_t(y)]_{J,J'} = \begin{cases}
      y_{J \cup J'} & \text{if $J \cup J' \in I_{2t}$,}\\
      0 & \text{otherwise},
      \end{cases}
    \]
    which is the second change.  The third and last change, requiring
    $y$ to be a nonnegative vector, enables us to say that the first
    step $\las^\alpha_1(G)$ coincides with $\vartheta'(G)$, the
    strengthened version of Lov\'asz $\vartheta$-number
    \cite{Lovasz-1979} due to Schrijver~\cite{Schrijver-1979}. We obtain
    \[
      \vartheta'(G) = \las^\alpha_1(G) \geq \las^\alpha_2(G) \geq \ldots \geq
      \las^\alpha_{\alpha(G)}(G) = \alpha(G).
    \]

    In \cite{de-Laat-Vallentin-2015} this semidefinite programming
    hierarchy~\eqref{eq:Lasserre-hierarchy-alpha} is generalized to
    give a hierarchy for the packing number $\alpha(X,r)$. In
    particular, independence numbers of topological packing graphs are
    introduced, which include the packings graphs $P(X,r)$ on compact
    metric spaces.

    Since $X$ can have infinitely many points, some topology is needed
    to generalize the objects
    in~\eqref{eq:Lasserre-hierarchy-alpha}. The natural distance on
    nonempty subsets of $X$ is the \emph{Hausdorff distance}
\[
  d_H(J,J') = \inf\{\varepsilon : J \subseteq J'_{\varepsilon} \text{ and }
  J' \subseteq J_{\varepsilon}\},
\]
where $J_\varepsilon = \bigcup_{x \in J} B(x,\varepsilon)$ is the
$\varepsilon$-thickening of the set $J$. 

Using the Hausdorff distance on $I_t$ instead of $\mathcal{P}_t(X)$
has an important advantage. By considering $\alpha(X,r)$ we are
interested in cardinalities and generally elements in
$\mathcal{P}_t(X)$ with non-equal cardinality can be topologically
close. For instance, a sequence of two or more points in $\mathcal{P}_t(X)$, which are getting arbitarily close, converges to one limit point. However, because we work with $I_t$, and not with
$\mathcal{P}_t(X)$, independent sets having different cardinality, lie
in separate connected components; see the discussion in \cite[Section
2]{de-Laat-Vallentin-2015}.

In the generalization, the vector $y \in \R_{\geq 0}^{I_{2t}}$ is replaced by
a measure $\lambda \in \mathcal{M}(I_{2t})_{\geq 0}$ in the cone of non-negative Radon measures. The idea is that an independent set $S$ determines a feasible solution
\[
  \lambda = \sum_{R \in I_{2t} : R \subseteq S} \delta_R, \quad \text{where $\delta_R$ is the delta
measure at $R$,}
\]
of the $t$-th step of the packing hierarchy.

The map which assigns $y$ to the truncated combinatorial moment matrix
$M_t(y)$ is replaced by a dual construction using the
adjoint map. We take this detour because we do not know a more direct way.

By the Riesz representation theorem, the dual space of the space of
real-valued continuous functions $\mathcal{C}(I_{2t})$, equipped with
the supremum norm, is the space of signed Radon measures
$\mathcal{M}(I_{2t})$. In particular, the pairing 
\[
\langle \cdot , \cdot \rangle: \mathcal{C}(X) \times \mathcal{M}(X) \rightarrow \R, \langle f,\mu\rangle \coloneqq \int_X f d\mu
\]
is well-defined for every compact metric space $X$ such as $I_{2t}$. For the ease of notation, we will denote any such map by $\langle\cdot , \cdot \rangle$ and omit to specify $X$ in the remainder of this article.

Furthermore, we denote by $\mathcal{M}(I_{2t})_{\geq 0}$ the cone of nonnegative Radon measures and by $\mathcal{C}(I_{2t})_{\geq 0}$ the cone of nonnegative continuous functions. Note that these cones are their respective duals. Similarly, the
dual space of symmetric, real-valued continuous kernels
$\mathcal{C}(I_t \times I_t)_{\sym}$ is the space of symmetric Radon
measures $\mathcal{M}(I_t \times I_t)_{\sym}$. The dual cone of
positive definite kernels, that is kernels
$K \in \mathcal{C}(I_t \times I_t)_{\sym}$ which satisfy
\[
  (K(J_i, J_j))_{i,j=1}^m \succeq 0 \quad \text{ for all } \; J_1, \ldots, J_m
  \in I_{2t}, \; m \in \mathbb{N},
\]
we denote by $\mathcal{M}(I_t \times I_t)_{\succeq 0}$.

We define the map
\[
  A_t : \mathcal{C}(I_t \times I_t)_{\sym} \to \mathcal{C}(I_{2t})
  \quad \text{by} \quad
  A_t K(S) = \sum_{J,J' \in I_t : J \cup J' = S} K(J,J'),
\]
where $K \in \mathcal{C}(I_t \times I_t)_{\sym}$ and $S \in I_{2t}$.
A proof of the fact that $A_t$ is well-defined, that is $A_tK \in\mathcal{C}(I_{2t})$, can be found in \cite[Section 3.1]{Bekker-Oliveria-2023}.
Note that the above sum has at most $2^{2t}$ summands and consequently $\|A_tK\|_\infty \leq 2^{2t} \|K\|_\infty$ implying that $A_t$ is bounded and hence continuous.

Therefore, the adjoint map
\[
  A_t^* : \mathcal{M}(I_{2t}) \to \mathcal{M}(I_t \times I_t)_{\sym},
\]
(well-)defined by
\[
  \langle K, A^* \lambda \rangle = \langle A_t K, \lambda \rangle,
\]
is the natural replacement for $y \mapsto M_t(y)$.

After these preparations one can define the $t$-th step of the
\emph{packing hierarchy} by
\[
\las^\alpha_t(P(X,r)) = \max\{\lambda(I_{=1}) : \lambda \in \mathcal{M}(I_{2t})_{\geq 0}, \;
\lambda(\{\emptyset\}) = 1, \;
A_t^*(\lambda) \in \mathcal{M}(I_t \times I_t)_{\succeq 0} \},
\]
where $I_{=1}$ denotes the set of independent subsets with cardinality
exactly $1$.  Again, and this is the main result of
\cite{de-Laat-Vallentin-2015},
\[
        \vartheta'(P(X,r)) = \las^\alpha_1(P(X,r)) \geq \las^\alpha_2(P(X,r)) \geq \ldots \geq
      \las^\alpha_{\alpha(X,r)}(P(X,r)) = \alpha(X,r)
\]
holds.

The first step coincides with the generalization of the
$\vartheta'$-number for the graph $P(X,r)$, defined in
\cite{Bachoc-Nebe-Oliveira-Vallentin-2009}.  If for instance the
compact metric space is the unit sphere, $X = S^{n-1}$, then the first
step coincides with the linear programming bound of Delsarte,
Goethals, Seidel \cite{Delsarte-Goethals-Seidel-1977}.

The convergence follows from a Choquet-type variant (cf.\ Simon
\cite{Simon-2011}) of the crucial
Lemma~\ref{lem:crucial-lemma}, where finite convex combinations are
replaced by integrals with respect to probability measures, which
reads:

\begin{lemma} (de Laat, Vallentin \cite[Proposition
  1]{de-Laat-Vallentin-2015})
\label{lem:crucial-lemma-generalized}
  Let $I = I_{\alpha(X,r)}$ be the set of all independent sets of
$P(X,r)$.  Let $\lambda \in \mathcal{M}(I)$ be so that
$\lambda(\{\emptyset\}) = 1$ and
$A_{\alpha(X,r)}^* \lambda \in \mathcal{M}(I \times I)_{\succeq
  0}$. Then there exists a unique probability measure
$\sigma \in \mathcal{M}(I)_{\geq 0}$ so that $\lambda$ can be represented
as
  \[
    \lambda = \int \chi^R \, d\sigma(R), \quad \text{where} \quad \chi^R = \sum_{Q \subseteq R} \delta_Q.
  \]
\end{lemma}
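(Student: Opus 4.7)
The strategy is to mirror the finite-dimensional argument of Lemma~\ref{lem:crucial-lemma}, replacing finite conic combinations by integrals against a measure. First verify that each $\chi^R$ is itself a feasible vector: clearly $\chi^R(\{\emptyset\}) = 1$, and for any positive definite kernel $K \in \mathcal{C}(I \times I)_{\sym}$,
\[
\langle K, A_\alpha^*(\chi^R) \rangle \;=\; \langle A_\alpha K, \chi^R \rangle \;=\; \sum_{Q \subseteq R} \; \sum_{J \cup J' = Q} K(J,J') \;=\; \sum_{J,J' \subseteq R} K(J,J') \;\geq\; 0,
\]
since the last expression is the sum of all entries of the principal submatrix of $K$ indexed by $\mathcal{P}(R)$. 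This is the continuous analogue of the homomorphism computation in the easy direction of Lemma~\ref{lem:crucial-lemma}.

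Next, to realize an arbitrary feasible $\lambda$ as $\int \chi^R \, d\sigma(R)$, the plan is to exploit the decomposition of $I$ into clopen strata $I = \bigsqcup_{k=0}^{\alpha} I_{=k}$ by cardinality, on each of which the combinatorial data ``which subsets are contained in $R$'' is locally constant. For each $n$ choose a finite Borel partition $U_1^{(n)}, \ldots, U_{N_n}^{(n)}$ of $I$ of Hausdorff diameter at most $1/n$, each contained in a single stratum, with representatives $R_i^{(n)} \in U_i^{(n)}$. Integrating the constraints of the continuous problem against the indicators of the $U_i^{(n)}$ yields a finite-dimensional instance to which Lemma~\ref{lem:crucial-lemma} applies, producing nonnegative coefficients $\alpha_i^{(n)}$ with $\sum_i \alpha_i^{(n)} = 1$ and approximating discrete measures $\sigma_n = \sum_i \alpha_i^{(n)} \delta_{R_i^{(n)}}$ whose images $\int \chi^R \, d\sigma_n$ converge weak-$*$ to $\lambda$ as $n \to \infty$. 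Since $(\sigma_n)$ is a sequence of probability measures on the compact metric space $I$, Banach--Alaoglu supplies a weak-$*$ convergent subsequence $\sigma_n \to \sigma$, and continuity of $R \mapsto \chi^R$ from $I$ into $\mathcal{M}(I)$ (which holds because on each stratum the finite family of subset-functionals depends continuously on $R$) then yields $\lambda = \int \chi^R \, d\sigma(R)$, with $\sigma(I)=1$ following from the normalization $\lambda(\{\emptyset\})=1$.

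For uniqueness, observe that $R \mapsto \chi^R$ is a continuous injection from the compact metric space $I$ into $\mathcal{M}(I)$, since $R$ is recovered as the unique maximal element in the support of $\chi^R$; this map is therefore a homeomorphism onto its image, and the pushed-forward measure $\sigma$ is determined by $\lambda$. An alternative, explicit inversion is obtained by assembling the Möbius functionals $\psi^R(Q) = (-1)^{|Q \setminus R|} \mathbf{1}_{R \subseteq Q}$ into a kernel that is continuous on each stratum $I_{=k} \times I_{=\ell}$, which recovers $\sigma$ from $\lambda$ by integration. The principal obstacle is the discretization step: one must extract the nonnegative coefficients $\alpha_i^{(n)}$ from the global positive-semidefinite constraint $A_\alpha^* \lambda \in \mathcal{M}(I \times I)_{\succeq 0}$ in such a way that the induced finite-dimensional moment matrices satisfy the hypotheses of Lemma~\ref{lem:crucial-lemma}, and then show that the resulting approximation error vanishes uniformly across strata in the weak-$*$ topology. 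The clopen stratification of $I$ by cardinality is the crucial structural feature that allows the finite Lemma~\ref{lem:crucial-lemma} to transfer cleanly to each piece.
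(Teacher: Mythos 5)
This lemma is not proved in the paper at all: it is quoted verbatim from de Laat--Vallentin \cite[Proposition 1]{de-Laat-Vallentin-2015}, so there is no in-paper proof to compare against. Judged on its own terms, your proposal has a genuine gap exactly where you flag one, and the flagged step is not a deferrable technicality but the entire content of the result. The finite Lemma~\ref{lem:crucial-lemma} needs a moment matrix whose $(J,J')$ entry is $y(J\cup J')$, i.e.\ it needs the semilattice structure of the power set, and this structure does not descend to a partition of $I$ into small Hausdorff cells: if $J\in U_i^{(n)}$ and $J'\in U_j^{(n)}$, the union $J\cup J'$ does not lie in a well-defined cell $U_k^{(n)}$ (nearby pairs of representatives can even have unions of different cardinality), so ``integrating the constraints against the indicators of the $U_i^{(n)}$'' does not produce an instance satisfying the hypotheses of Lemma~\ref{lem:crucial-lemma}. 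The existence of the $\alpha_i^{(n)}$ and the vanishing of the approximation error are therefore never established. The proof in the cited reference avoids discretization entirely: one defines $\sigma$ directly by M\"obius inversion, as the image of $\lambda$ under the adjoint of the inverse of the operator $Tf(R)=\sum_{Q\subseteq R}f(Q)$ (a continuous bijection of $\mathcal{C}(I)$, thanks to the clopen stratification by cardinality and the separation of points within independent sets), and then proves $\sigma\geq 0$ by testing against a nonnegative $g\in\mathcal{C}(I)_{\geq 0}$ via the positive definite kernel $f\otimes f$ with $\sum_{Q\subseteq R}f(Q)=\sqrt{g(R)}$ --- precisely the computation this paper reuses in the convergence lemma of Section~\ref{sec:covering-hierarchy}.

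Your uniqueness argument is also flawed as stated. Injectivity of $R\mapsto\chi^R$ does not imply injectivity of the barycenter map $\sigma\mapsto\int\chi^R\,d\sigma(R)$ on probability measures; distinct measures supported on an embedded compact set can have the same integral (this is exactly why uniqueness in Choquet theory requires a simplex, cf.\ \cite{Simon-2011}). Your ``alternative, explicit inversion'' is the right idea, but the kernel $\psi^R(Q)=(-1)^{|Q\setminus R|}\mathbf{1}_{R\subseteq Q}$ is not continuous even on a single stratum: taking $R=\{x\}$ and $Q_n=\{y_n\}$ with $y_n\to x$, $y_n\neq x$, one has $\mathbf{1}_{R\subseteq Q_n}=0$ while $\mathbf{1}_{R\subseteq \lim Q_n}=1$. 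The inversion has to be formulated at the level of the operator $T$ above (whose continuity is the nontrivial point addressed in \cite{Bekker-Oliveria-2023}), not pointwise through a kernel. Your verification that each $\chi^R$ is feasible is correct, but it is the easy converse direction and is not needed for the statement as given.
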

 
 Next to putting the linear programming bound into perspective and
 giving theoretical convergence, the packing hierarchy has turned out to be
 useful in various applications, especially for the unit sphere or for
 Euclidean space where the presence of symmetries makes it possible to
 simplify the computations.  Bachoc, Vallentin
 \cite{Bachoc-Vallentin-2008} used semidefinite constraints from
 $\las^\alpha_2$ to find new upper bounds for the kissing number. De Laat
 \cite{de-Laat-2020} used the second step of the hierarchy in the
 context of energy minimization on the sphere $S^2$. De Laat, Leijenhorst, de Munick Keizer \cite{de-Laat-Leijenhorst-de-Munick-Keizer-2024} used the second step
 to prove the uniqueness of the four-dimensional kissing configuration. De Laat,
 Machado, de Munick Keizer
 \cite{de-Laat-Machado-de-Munick-Keizer-2022} computed $\las^\alpha_2$ and
 $\las^\alpha_3$ for the problem of equiangular lines; they were even able to
 turn these computations in an asymptotic analysis.  Cohn, Salmon
 \cite{Cohn-Salmon-2021} showed that the packing hierarchy
 converges to the packing density for Euclidean space. Cohn, de Laat,
 Salmon \cite{Cohn-de-Laat-Salmon-2022} used constraints from $\las^\alpha_2$
 to find new upper bounds for sphere packing densities in dimensions $4$
 through $7$ and $9$ through $16$.

 \section{SDP hierarchy for geometric covering problems}
\label{sec:covering-hierarchy}

The finite-graph analog of the covering number is the domination
number. For a finite graph $G = (V, E)$, with vertex set
$V = \{1, \ldots, n\}$ and edge set $E$ a subset of the vertex set
$D \subseteq V$ is called a \emph{dominating set} if for every vertex
$i \in V$ either the vertex lies itself in the dominating set or there is vertex
$j \in D$, in the dominating set, which is adjacent to $i$. The
\emph{domination number} $\gamma(G)$ of $G$ is the cardinality of a
smallest dominating set. Let us further define the \emph{extended neighbourhood} of a vertex $j$ by $N(j)\coloneqq \{i\in V: i=j \lor \{i,j\}\in E\}$. Then, one can formulate $\gamma(G)$ as an integer
linear program with $0/1$-variables:
\[
 \gamma(G) =  \min\left\{\sum_{i=1}^n x_i : x \in \{0,1\}^n,
    \sum_{i \in N(j)} x_i \geq 1 \; (j \in V) \right\},
\]
and apply the Lasserre hierarchy. Here, the $t$-th step of the Lasserre
hierarchy is
\[
  \begin{split}
  \las^\gamma_t(G) = \min\Big\{ \sum_{i=1}^n  y(\{i\}) \; : \; & y \in
  \R^{\mathcal{P}_{2t}(V)}, y(\emptyset) = 1,\\[-3ex]
  & M_t(y) \succeq 0, \; M^j_{t-1}(y) \succeq 0 \; (j \in V)\Big\}
  \end{split}
\]
where the linear constraint that every vertex $j \in V$ either should
lie in the dominating set or should be adjacent to a vertex in the
dominating set determines a truncated localizing matrix
\begin{equation}
[M^j_{t-1}(y)]_{J,J'} = \sum_{i \in N(j)} y(J \cup J' \cup
\{i\}) - y(J \cup J'),\label{eq:truncated_localizing_matrix_finite}
\end{equation}
where $J, J' \in \mathcal{P}_{t-1}(V)$.

In the geometric setting we consider the \emph{covering graph} $C(X,r)$
with vertex set $X$ in which two vertices $x, x' \in X$ are adjacent
whenever $d(x,x') \leq r$. Then, as dominating sets in
$C(X,r)$ determine the centers of balls with radius $r$ of
a covering of $X$, and vice versa, we have
$\gamma(C(X,r)) = \mathcal{N}(X,r)$.

\smallskip

We want to define a semidefinite hierarchy which converges in finitely
many steps to the covering number $\mathcal{N}(X,r)$. For the proof we
will make use of Lemma~\ref{lem:crucial-lemma-generalized}. Therefore,
we must assume \emph{a priori} that we know a lower bound $\varepsilon > 0$ such that an
optimal covering $X$ determines a packing of balls with radius
$\varepsilon$. Clearly, such a positive lower bound always exists as
optimal coverings consist of finitely many pairwise distinct balls. We further comment on $\varepsilon$ at the end of this section.

We define $I_t$ to be the set of subsets of $X$ with at
most $t$ elements which are independent in the packing graph $P(X,\varepsilon)$.
To emphasize that in the following everything depends on $\varepsilon$ it would make sense to change the notation from $I_t$ to $I^{\varepsilon}_t$. However, we have chosen not to do so in order to avoid cluttering the notation.

\smallskip

For $z \in X$ we define the map
\[
  B^z_t : \mathcal{C}(I_t \times I_t)_{\sym} \to \mathcal{C}(I_{2t+2})
\]
by
\[
  B^z_t K(S) = \sum_{x \in \overline{B}(z,r), J, J'\in I_t : J \cup J' \cup
    \{x\} = S} K(J,J')
  - \sum_{J,J' \in I_t : J \cup J' = S} K(J,J'),
\]
where $K \in \mathcal{C}(I_t \times I_t)_{\sym}$ and $S \in I_{2t+2}$. Similarly as operator $A_t$ we observe that 
$\|B_t K \|_\infty \leq ((2t+2) 2^{2t} + 2^{2t}) \|K\|_\infty$ which implies $B_t^z$ being bounded and continuous. Hence, the adjoint map
$(B^z_t)^* : \mathcal{M}(I_{2t+2}) \to \mathcal{M}(I_t \times
I_t)_{\sym}$ gives the analog of the map \eqref{eq:truncated_localizing_matrix_finite} which assigns $y$ to the truncated localizing matrix.

\begin{definition}
  For $t = 1, 2, \ldots, $ we define the $t$-th step of the \emph{covering
  hierarchy} by
\[
\begin{split}
\las^\gamma_t(C(X,r)) = \inf\Big\{\lambda(I_{=1})   \; : \; & \lambda \in \mathcal{M}(I_{2t})_{\geq 0}, \;
\lambda(\{\emptyset\}) = 1,\\
& A_t^*(\lambda) \in \mathcal{M}(I_t \times I_t)_{\succeq 0}, \\
& (B^z_{t-1})^*(\lambda) \in  \mathcal{M}(I_{t-1} \times
I_{t-1})_{\succeq 0} \; (z \in X) \Big\}.
\end{split}
\]
\end{definition}

\begin{theorem}
 \label{thm:main}
The covering hierarchy gives a monotonically increasing sequence of
lower bounds for the covering number and converges to the covering
number in at most $\alpha(P(X,\varepsilon))$ steps:
\[
\las^\gamma_1(C(X,r)) \leq \las^\gamma_2(C(X,r)) \leq \ldots
\leq \las^\gamma_{\alpha(P(X,\varepsilon))}(C(X,r)) =
\mathcal{N}(X,r).
\]
\end{theorem}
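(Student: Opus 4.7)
My plan follows the three-part template of Lasserre's $0/1$-ILP convergence proof and the packing hierarchy: verify monotonicity by truncation, establish the upper bound $\las^\gamma_t(C(X,r)) \leq \mathcal{N}(X,r)$ at every level $t$ by exhibiting a feasible measure built from an optimal covering, and establish the matching lower bound at $t = \alpha(P(X,\varepsilon))$ using Lemma~\ref{lem:crucial-lemma-generalized}. For monotonicity I would exploit that, under the Hausdorff topology, independent sets of different cardinalities lie in different connected components, so $I_{2t-2}$ is closed-and-open inside $I_{2t}$ and any feasible $\lambda \in \mathcal{M}(I_{2t})_{\geq 0}$ restricts to a well-defined $\lambda' \in \mathcal{M}(I_{2t-2})_{\geq 0}$ with the same values of $\lambda'(\{\emptyset\})$ and $\lambda'(I_{=1})$. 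The moment and localizing constraints transfer: any positive definite kernel on $I_{t-1} \times I_{t-1}$ (respectively $I_{t-2} \times I_{t-2}$) extends by zero to a positive definite kernel on $I_t \times I_t$ (respectively $I_{t-1} \times I_{t-1}$) whose image under $A_t$ (respectively $B^z_{t-1}$) vanishes off $I_{2t-2}$ and coincides there with the image of the smaller kernel under $A_{t-1}$ (respectively $B^z_{t-2}$), so feasibility of $\lambda$ at level $t$ implies feasibility of $\lambda'$ at level $t-1$.

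For the upper bound I would mimic the argument showing $\ILP \geq \las_{n+1}$ in Section~\ref{sec:Lasserre-hierarchy}. Fix an optimal covering $Z$ with $|Z| = \mathcal{N}(X,r)$. The standing hypothesis on $\varepsilon$ makes $Z$ an independent set of $P(X,\varepsilon)$, so the discrete measure $\lambda = \sum_{R \subseteq Z,\, R \in I_{2t}} \delta_R$ is well defined and satisfies $\lambda(\{\emptyset\}) = 1$ and $\lambda(I_{=1}) = |Z|$. A direct computation yields
\[
\langle A_t K,\lambda\rangle = \mathbf{1}^{\tp} K|_Z\, \mathbf{1} \quad \text{and} \quad \langle B^z_{t-1} K,\lambda\rangle = \bigl(|Z \cap \overline{B}(z,r)| - 1\bigr)\, \mathbf{1}^{\tp} K|_Z\, \mathbf{1},
\]
where $K|_Z$ denotes the finite principal submatrix of $K$ indexed by subsets of $Z$ lying in $I_t$ (respectively $I_{t-1}$). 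Positive semidefiniteness of $K|_Z$ together with the covering condition $|Z \cap \overline{B}(z,r)| \geq 1$ then deliver feasibility, and $\lambda$ achieves the desired value $|Z|$.

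The heart of the proof, and the main obstacle, is the lower bound at $t = \alpha(P(X,\varepsilon))$. At this level $I_{2t}$ coincides with the full set $I$ of independent sets of $P(X,\varepsilon)$, so Lemma~\ref{lem:crucial-lemma-generalized} furnishes a probability measure $\sigma$ on $I$ with $\lambda = \int \chi^R\, d\sigma(R)$ and $\lambda(I_{=1}) = \int |R|\, d\sigma(R)$. It therefore suffices to show that $\sigma$ is supported on coverings of $X$, since each such $R$ then satisfies $|R| \geq \mathcal{N}(X,r)$. Pushing the Choquet representation through the adjoint and testing against rank-one kernels $K = f f^{\tp}$ yields, for every $f \in \mathcal{C}(I_{t-1})$ and every $z \in X$,
\[
\int \bigl(|R \cap \overline{B}(z,r)| - 1\bigr)\Bigl(\sum_{J \subseteq R,\,|J| \leq t-1} f(J)\Bigr)^{2}\, d\sigma(R) \geq 0.
\]
The plan is to choose $f$ so that the inner square effectively isolates $\sigma$-mass around a prescribed $R^* \in I$, thereby forcing $|R^* \cap \overline{B}(z,r)| \geq 1$; the natural candidate is the M\"obius dual basis $\psi^{R^*}$ from the footnote of Lemma~\ref{lem:crucial-lemma}, which satisfies $\sum_{J \subseteq R} \psi^{R^*}(J) = \delta_{R,R^*}$ on the full power set. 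The delicate step, which I expect to be the main obstacle, is that restricting $\psi^{R^*}$ to $I_{t-1}$ introduces spurious contributions from independent sets of maximal size $\alpha(P(X,\varepsilon))$, and these must be handled --- for instance by varying $R^*$ across $I_{t-1}$ and combining the resulting inequalities with the moment constraint --- in order to conclude that $R$ covers every $z \in X$ for $\sigma$-almost every $R$. Once this pointwise conclusion is secured, $\int |R|\, d\sigma(R) \geq \mathcal{N}(X,r)$ closes the chain of inequalities.
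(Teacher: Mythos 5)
Your first two steps --- monotonicity by restriction (using that $I_{2t-2}$ is clopen in $I_{2t}$ and that kernels extend by zero across components) and the upper bound $\las^\gamma_t(C(X,r)) \leq \mathcal{N}(X,r)$ via the measure $\lambda = \sum_{R \subseteq Z,\, R \in I_{2t}} \delta_R$ built from an optimal covering --- are correct and essentially identical to the paper's first two lemmas, including the computation $\langle B^z_{t-1}K,\lambda\rangle = (|Z \cap \overline{B}(z,r)|-1)\,\mathbf{1}^{\sf T} K|_Z \mathbf{1}$.

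The gap is in the convergence step, and it is exactly where you flag it. Your plan to take $f = \psi^{R^*}$ (the M\"obius dual basis) so that $\sum_{J \subseteq R} f(J) = \delta_{R,R^*}$ does not work in this setting for two reasons. First, $I$ is in general an uncountable compact space and a single point $R^*$ typically has $\sigma$-measure zero, so isolating one $R^*$ yields the vacuous inequality $0 \geq 0$; you must localize to a \emph{neighborhood} of $R^*$, not to the point itself. Second, $\psi^{R^*}$ is an indicator-type function of the condition $R^* \subseteq J$ and is not continuous on $I_{t-1}$ in the Hausdorff topology, so $\psi^{R^*} \otimes \psi^{R^*}$ is not an admissible test kernel. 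The paper resolves both issues at once by running the M\"obius inversion in the opposite direction: for an \emph{arbitrary} nonnegative continuous $g \in \mathcal{C}(I_\alpha)_{\geq 0}$ it sets
\[
f(Q) = \sum_{P \subseteq Q} (-1)^{|Q \setminus P|} \sqrt{g(P)}, \qquad \text{so that} \qquad \sum_{Q \subseteq R} f(Q) = \sqrt{g(R)},
\]
with continuity of $f$ supplied by the cited work of Bekker and de Oliveira Filho. Testing $(B^z)^*\lambda$ against $f \otimes f$ then gives $\int \bigl(|\overline{B}(z,r) \cap R| - 1\bigr)\, g(R)\, d\sigma(R) \geq 0$ for \emph{every} such $g$. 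Since the non-covering condition $\overline{B}(z,r) \cap R = \emptyset$ is open in $R$, one may choose $g$ to be a bump function supported in such a neighborhood and obtain a strictly negative integral, a contradiction; hence $\sigma$ is supported on coverings and $\lambda(I_{=1}) = \int |R|\, d\sigma(R) \geq \mathcal{N}(X,r)$. Note also that the ``spurious contributions from sets of maximal size'' you worry about are sidestepped in the paper because at the final level the kernel is taken on $I_\alpha \times I_\alpha$ with $I_\alpha = I$, so the sum $\sum_{Q \subseteq R} f(Q)$ is not truncated. Your proposal correctly identifies the skeleton (Choquet representation via Lemma~\ref{lem:crucial-lemma-generalized}, reduction to showing $\sigma$ is supported on coverings, rank-one test kernels) but leaves the decisive localization argument unresolved, so as written it does not constitute a proof.
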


\medskip

Now we return to the proof of the main theorem, which we split into three lemmas.

\medskip

Every step of the covering hierarchy gives a lower bound for the
covering number.

\begin{lemma}
  For every $t = 1, 2, \ldots$ the inequality
  $\las^\gamma_t(C(X,r)) \leq \mathcal{N}(X,r)$ holds.
\end{lemma}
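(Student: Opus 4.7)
The plan is to mimic the convergence argument for the classical Lasserre hierarchy in Section~\ref{sec:Lasserre-hierarchy}: from an optimal covering $Z \subseteq X$ with $|Z| = \mathcal{N}(X, r)$, I would construct an explicit feasible measure for $\las^\gamma_t(C(X,r))$ whose objective value equals $\mathcal{N}(X, r)$. By the standing assumption on $\varepsilon$, the set $Z$ is independent in $P(X, \varepsilon)$, so every $R \subseteq Z$ with $|R| \leq 2t$ lies in $I_{2t}$. As the measure-theoretic analog of the characteristic vector $\chi^J_n$ used in Section~\ref{sec:Lasserre-hierarchy}, set
\[
\lambda \;=\; \sum_{R \subseteq Z,\, |R| \leq 2t} \delta_R \;\in\; \mathcal{M}(I_{2t})_{\geq 0}.
\]
Then $\lambda(\{\emptyset\}) = 1$ and $\lambda(I_{=1}) = |Z| = \mathcal{N}(X, r)$ by inspection, so only the two semidefinite constraints remain to be verified.

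For the moment condition $A_t^*(\lambda) \in \mathcal{M}(I_t \times I_t)_{\succeq 0}$, I would pair with an arbitrary positive semidefinite kernel $K \in \mathcal{C}(I_t \times I_t)_{\sym}$, apply $\langle K, A_t^*(\lambda)\rangle = \langle A_t K, \lambda\rangle$, and interchange sums. The constraint $R \subseteq Z$ together with $R = J \cup J'$ decouples into the independent conditions $J, J' \subseteq Z$, so the inner sums merge into a product and
\[
\langle K, A_t^*(\lambda)\rangle \;=\; \sum_{\substack{J, J' \subseteq Z \\ |J|, |J'| \leq t}} K(J, J').
\]
Because the index set is finite, this equals $\mathbf{1}^{\sf T}[K(J, J')]\mathbf{1}$ for a positive semidefinite matrix and is therefore nonnegative; this is the measure-theoretic translation of the easy direction of Lemma~\ref{lem:crucial-lemma}.

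The main step is the localizing constraint $(B^z_{t-1})^*(\lambda) \in \mathcal{M}(I_{t-1} \times I_{t-1})_{\succeq 0}$ for each $z \in X$, which plays the role of the homomorphism computation for the localizing matrix $M^j_n$ in Section~\ref{sec:Lasserre-hierarchy} and is the place where the covering property of $Z$ enters. An analogous reindexing of $\langle B^z_{t-1} K, \lambda\rangle$ should yield
\[
\langle K, (B^z_{t-1})^*(\lambda)\rangle \;=\; \bigl(|Z \cap \overline{B}(z, r)| - 1\bigr)\sum_{\substack{J, J' \subseteq Z \\ |J|, |J'| \leq t-1}} K(J, J'),
\]
since in the positive part the only admissible $x$'s are those in $Z \cap \overline{B}(z, r)$, each producing one copy of the inner double sum, while the negative part contributes exactly one copy of the same sum. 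The scalar factor is nonnegative precisely because $Z$ is a covering of $X$, and the second factor is nonnegative by the same positive semidefinite argument as above. The main obstacle will be the careful bookkeeping in this last reindexing: keeping track of which triples $(J, J', x)$ with $J \cup J' \cup \{x\} \subseteq Z$ contribute, handling overlaps between $x$ and $J \cup J'$, and noting that the independence of $J \cup J' \cup \{x\}$ in $P(X, \varepsilon)$ and the cardinality bound $|J \cup J' \cup \{x\}| \leq 2t$ are automatic once everything is contained in $Z$. Once this is in place, $\lambda$ is feasible for $\las^\gamma_t(C(X, r))$ and the claimed inequality $\las^\gamma_t(C(X, r)) \leq \mathcal{N}(X, r)$ follows.
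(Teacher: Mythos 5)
Your proposal is correct and follows essentially the same route as the paper: both construct the feasible measure $\lambda=\sum_{R\subseteq Z,\,|R|\le 2t}\delta_R$ from a covering that is $\varepsilon$-separated, check $\lambda(\{\emptyset\})=1$, and reduce both semidefinite constraints to the nonnegativity of $\bigl(|Z\cap\overline{B}(z,r)|-1\bigr)\sum_{J,J'\subseteq Z}K(J,J')$, using the covering property for the first factor and positive definiteness of $K$ for the second. The only cosmetic difference is that the paper works with an arbitrary finite $\varepsilon$-separated covering $Y$ rather than insisting on an optimal one, which changes nothing in the argument.
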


\begin{proof}
  We verify this inequality by showing that every finite covering $Y$
  of $X$ with minimum separation $\varepsilon$ gives a feasible solution
\[
\lambda = \sum_{Z\in I_{2t}:\ Z\subseteq Y}\delta_Z
\]
of the $t$-step of the covering hierarchy with objective
$\lambda(I_{=1}) = |Y|$.

Indeed, we have
\[
  \lambda\left(\{\emptyset\}\right) =\sum_{Z\in I_{2t}: Z\subseteq
    Y}\delta_Z
  \left(\{\emptyset\}\right)=\delta_{\emptyset}\left(\{\emptyset\}\right)=1.
\]

Then, for every $K \in \mathcal{C}(I_t\times I_t)_{\succeq 0}$:
\[
\begin{split}
		\langle A_t^* \lambda ,K \rangle & = \langle \lambda,A_t K\rangle\\
		& = \sum_{Z\in I_{2t} :Z\subseteq Y} \sum_{J,J'\in I_t
                  : Z = J\cup J'} K(J,J')\\
		& = \sum_{J,J'\in I_t : J,J'\subseteq Y} K(J,J')\\
                & \geq 0,
\end{split}
\]
since $K\succeq 0$.

Moreover, for every $z \in X$ and every $K' \in \mathcal{C}(I_{t-1}\times I_{t-1})_{\succeq 0}$
\[
\begin{split}
  & \langle (B_{t-1}^z)^* \lambda, K' \rangle\\
  = \; & \langle
                \lambda, B_{t-1}^z K'\rangle \\
		= \; & \sum_{Z\in I_{2t} :  Z\subseteq Y} \Big(\sum_{x \in
                  \overline{B}(z,r),\ J,J'\in I_{t-1} :  Z = J \cup J'
                  \cup \{x\} } K'(J,J')  -\sum_{J,J'\in I_{t-1} : Z = J \cup J'} K'(J,J')\Big)\\
          =\; & \sum_{x\in \overline{B}(z,r)} \sum_{J,J'\in I_{t-1}
          	:\ \substack{J\cup J' \cup \{x\}\in I_{2t}\\ J\cup J'\cup \{x\} \subseteq Y}} K'(J,J') -\sum_{J,J'\in I_{t-1}:\ \substack{J\cup J' \in I_{2t}\\ J,J'\subseteq Y}} K'(J,J')\\
          \overset{*}{=} \; & \sum_{x\in Y \cap \overline{B}(z,r)} \sum_{J,J'\in I_{t-1}
          	:\ J,J'\subseteq Y} K'(J,J') -\sum_{J,J'\in I_{t-1} :
          	J,J'\subseteq Y} K'(J,J')\\
                 = \; & (|Y \cap \overline{B}(z,r)| - 1) \sum_{J,J'\in I_{t-1}:
                  J,J'\subseteq Y} K'(J,J')\\
          \geq \; & 0,
\end{split}
\]
where the equation marked by * exploits the fact that $A\subseteq Y$ implies that $A$ has minimum separation $\varepsilon$. Moreover, the last inequality is implied by the facts that
$Y \cap \overline{B}(z,r) \neq \emptyset$ since $Y$ is a covering and $K\succeq 0$.
\end{proof}

The covering hierarchy is monotonically increasing with $t$.

\begin{lemma}
For every $t = 1, 2, \ldots$ the inequality
  $\las^\gamma_t(C(X,r)) \leq \las^\gamma_{t+1}(C(X,r))$ holds.
\end{lemma}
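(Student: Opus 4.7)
The plan is to show that every feasible solution $\lambda$ of the $(t{+}1)$-st step restricts to a feasible solution $\lambda'$ of the $t$-th step with the same objective, so that the infimum is attained over a larger feasible set at level $t$. Since independent sets of different cardinalities lie in separate connected components of the relevant Hausdorff space, $I_{2t}$ is a clopen subset of $I_{2t+2}$ and $I_t$ is clopen in $I_{t+1}$ (and similarly $I_{t-1}$ in $I_t$). In particular, the restriction $\lambda' \coloneqq \lambda|_{I_{2t}}$ is a well-defined element of $\mathcal{M}(I_{2t})_{\geq 0}$, the normalization $\lambda'(\{\emptyset\}) = 1$ is immediate because $\emptyset \in I_{2t}$, and the objective is preserved since $I_{=1} \subseteq I_{2t}$ gives $\lambda'(I_{=1}) = \lambda(I_{=1})$.

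The main work is checking the two positive semidefiniteness conditions, and here the idea is a zero-extension trick that mimics taking a principal submatrix in the finite Lasserre hierarchy. Given any PSD kernel $K \in \mathcal{C}(I_t \times I_t)_{\succeq 0}$, I extend it to a kernel $\tilde K \in \mathcal{C}(I_{t+1} \times I_{t+1})_{\succeq 0}$ by setting $\tilde K(J,J') = K(J,J')$ when $(J,J') \in I_t \times I_t$ and $\tilde K(J,J') = 0$ otherwise; continuity follows from the clopen property and the kernel remains PSD because the extension acts as multiplication by the indicator of a clopen set on each side. A direct count shows that $A_{t+1}\tilde K$ is supported on $I_{2t}$: any $J, J' \in I_t$ with $J \cup J' = S$ forces $|S| \leq 2t$, and conversely for $S \in I_{2t}$ one recovers exactly $A_t K(S)$. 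Therefore
\[
\langle A_t^* \lambda', K\rangle = \langle A_t K, \lambda'\rangle = \langle A_{t+1} \tilde K, \lambda\rangle = \langle A_{t+1}^* \lambda, \tilde K\rangle \geq 0,
\]
which shows $A_t^*(\lambda') \in \mathcal{M}(I_t \times I_t)_{\succeq 0}$.

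The same argument handles the localizing constraint. For $z \in X$ and $K' \in \mathcal{C}(I_{t-1} \times I_{t-1})_{\succeq 0}$, I extend $K'$ by zero to a PSD kernel $\tilde K'$ on $I_t \times I_t$. Both sums in the definition of $B_t^z \tilde K'(S)$ are over pairs $J,J' \in I_{t-1}$, so they are empty whenever $|S| > 2t$; for $S \in I_{2t}$ the expression collapses to $B_{t-1}^z K'(S)$. Hence
\[
\langle (B_{t-1}^z)^* \lambda', K'\rangle = \langle B_{t-1}^z K', \lambda'\rangle = \langle B_t^z \tilde K', \lambda\rangle = \langle (B_t^z)^* \lambda, \tilde K'\rangle \geq 0,
\]
so $(B_{t-1}^z)^*(\lambda') \in \mathcal{M}(I_{t-1} \times I_{t-1})_{\succeq 0}$ for every $z \in X$. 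This shows $\lambda'$ is feasible for $\las^\gamma_t(C(X,r))$, and taking infimum yields the claimed inequality.

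The only real subtlety is verifying that extension by zero preserves both continuity and positive semidefiniteness; this is exactly where the clopen structure of $I_k$ inside $I_{k+1}$ (itself inherited from the minimum separation $\varepsilon$) is indispensable. Once that topological point is in hand, the cardinality inequality $|J \cup J'| \leq |J| + |J'|$ kills all the "new" contributions in the larger operators, and the proof reduces to an adjointness chase.
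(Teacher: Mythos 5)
Your proof is correct and follows the same route as the paper: restrict the feasible measure $\lambda$ from $\mathcal{M}(I_{2t+2})_{\geq 0}$ to $\mathcal{M}(I_{2t})_{\geq 0}$ and observe that feasibility and the objective value $\lambda(I_{=1})$ are preserved. The paper states this in one sentence without detail, whereas you carefully justify the feasibility of the restriction via the zero-extension of kernels and the clopen decomposition by cardinality; this is a legitimate and welcome elaboration of the same argument.
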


\begin{proof}
  Increasing $t$ strengthens the bound by imposing more constraints to
  the set of feasible solutions. To be precise, if the measure
  $\lambda \in \mathcal{M}(I_{2t+2})_{\geq 0}$ is feasible for
  $\las^\gamma_{t+1}(C(X,r))$, then its restriction to
  $\mathcal{M}(I_{2t})$ is also feasible for
  $\las^\gamma_t(C(X,r))$. Furthermore, $\lambda$ and its
  restriction have the same objective value, namely $\lambda(I_{=1})$.
\end{proof}

The covering hierarchy converges to the covering number.

\begin{lemma}
  Equality
  $\las^\gamma_{\alpha(P(X,\varepsilon))}(C(X,r)) = \mathcal{N}(X,r)$
  holds.
\end{lemma}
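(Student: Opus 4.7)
The plan is to prove the reverse inequality $\las^\gamma_t(C(X,r)) \geq \mathcal{N}(X,r)$ at $t := \alpha(P(X,\varepsilon))$, which combined with the first lemma of this subsection yields equality. First I would observe that because no independent set of $P(X,\varepsilon)$ exceeds $t$ elements, $I_{2t} = I_t = I$, so any feasible $\lambda$ satisfies $\lambda \in \mathcal{M}(I)_{\geq 0}$, $\lambda(\{\emptyset\}) = 1$, and $A_t^*\lambda \succeq 0$. Lemma~\ref{lem:crucial-lemma-generalized} then supplies a unique probability measure $\sigma$ on $I$ with $\lambda = \int \chi^R\, d\sigma(R)$, and a short calculation reduces the objective to $\lambda(I_{=1}) = \int |R|\, d\sigma(R)$.

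It will then suffice to show that $\sigma$ is supported on coverings of $X$, for then $|R| \geq \mathcal{N}(X,r)$ $\sigma$-almost everywhere and the bound follows by integration. For the localizing constraint I would compute, analogously to the feasibility lemma above, that for every $K' \in \mathcal{C}(I_{t-1}\times I_{t-1})_{\succeq 0}$,
\[
\langle B^z_{t-1}K',\chi^R\rangle = \bigl(|R\cap\overline{B}(z,r)| - 1\bigr)\,\tilde K'(R), \quad \tilde K'(R) := \sum_{\substack{J,J' \in I_{t-1}\\ J, J' \subseteq R}} K'(J,J') \geq 0,
\]
so that the PSD constraint on $(B^z_{t-1})^*\lambda$ translates into $\int_I \bigl(|R\cap\overline{B}(z,r)| - 1\bigr)\,\tilde K'(R)\,d\sigma(R) \geq 0$ for every $z \in X$ and every PSD $K'$. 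Arguing by contradiction, I would assume that some $R_0 \in \operatorname{supp}(\sigma)$ fails to cover $X$, pick $z_0$ with $R_0 \cap \overline{B}(z_0,r) = \emptyset$, and use Lipschitz continuity of $R \mapsto d(z_0, R)$ in the Hausdorff metric to get an open neighborhood $V \ni R_0$ on which $R \cap \overline{B}(z_0,r) = \emptyset$. Splitting the integral at $V$ then yields $\int_V \tilde K'\,d\sigma \leq (t-1)\int_{V^c}\tilde K'\,d\sigma$.

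The main obstacle will be designing a PSD kernel $K'$ for which this last inequality is violated. The leverage comes from the $2\varepsilon$-packing structure built into the definition of $I$: I would fix $J^* \subseteq R_0$ with $|J^*| \leq t-1$ (take $J^* = R_0$ if $|R_0| \leq t-1$, otherwise any $(t-1)$-subset of $R_0$) and note that $J^*$ is isolated in $I_{t-1}$ from distinct subsets of the same cardinality by some positive Hausdorff distance. A continuous bump $g \in \mathcal{C}(I_{t-1})$ with $g(J^*) = 1$ supported in a small $\eta$-Hausdorff-neighborhood of $J^*$ then gives, via $K' = g \otimes g$, a kernel $\tilde K' = G^2$ with $G(R) = \sum_{J \in I_{t-1}(R)} g(J)$ that is bounded below near $R_0$. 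The delicate point is to ensure that $\tilde K'$ becomes negligible on $V^c$ as $\eta \to 0$; this is where combining several such bumps (keyed to different subsets of $R_0$) and invoking Stone--Weierstrass density of the algebra generated by the $G$'s inside $\mathcal{C}(I)$ enters, together with $R_0 \in \operatorname{supp}(\sigma)$ and continuity of $\tilde K'$ to keep $\int_V \tilde K'\,d\sigma$ bounded below by a positive constant. The resulting contradiction forces $\operatorname{supp}(\sigma)$ into coverings and completes the proof.
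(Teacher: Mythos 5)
Your skeleton is the same as the paper's: reduce to the reverse inequality, invoke Lemma~\ref{lem:crucial-lemma-generalized} to write $\lambda=\int \chi^R\,d\sigma(R)$, compute $\langle B^z_{t-1}K',\chi^R\rangle=(|R\cap\overline{B}(z,r)|-1)\tilde K'(R)$, and argue that $\sigma$ must be supported on coverings. But there is a genuine gap at exactly the point you flag as ``the main obstacle'': you never actually produce a positive definite kernel $K'$ violating the localizing constraint. Your candidate $K'=g\otimes g$ with $g$ a nonnegative bump at a subset $J^*\subseteq R_0$ gives $\tilde K'(R)=G(R)^2$ with $G(R)=\sum_{J\subseteq R}g(J)$, and this is bounded below not only near $R_0$ but on \emph{every} $R$ containing a set close to $J^*$; such supersets may intersect $\overline{B}(z_0,r)$ and contribute positively with a factor up to $t-1$, so the sign of the integral is not controlled. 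The appeal to ``combining several bumps'' and Stone--Weierstrass does not repair this: the functions $\tilde K'$ coming from positive definite $K'$ form a cone, not an algebra, so density arguments do not apply, and in any case you need $\tilde K'$ to vanish on $V^c$ \emph{exactly within that cone}, which is a structural statement, not an approximation statement.

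The missing idea is that $f$ in $K'=f\otimes f$ need not be nonnegative. The paper takes an arbitrary $g\in\mathcal{C}(I_\alpha)_{\geq 0}$ and defines $f$ by M\"obius inversion, $f(Q)=\sum_{P\subseteq Q}(-1)^{|Q\setminus P|}\sqrt{g(P)}$, so that the telescoping identity $\sum_{Q\subseteq R}f(Q)=\sqrt{g(R)}$ holds and hence $\tilde K'(R)=\bigl(\sum_{Q\subseteq R}f(Q)\bigr)^2=g(R)$ exactly. In other words, the cone of achievable $\tilde K'$ contains \emph{every} nonnegative continuous function, so one simply chooses $g$ to be a bump supported in your neighborhood $V$ with $g(R_0)>0$; then the integral equals $-\int_V g\,d\sigma<0$ (since $R_0\in\operatorname{supp}\sigma$), and the contradiction is immediate with no estimate on $V^c$ needed. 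Without this (or an equivalent) construction your argument does not close.
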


\begin{proof}
  Set $\alpha = \alpha(P(X,\varepsilon))$. We consider a feasible measure
  $\lambda\in \mathcal{M}(I_{\alpha})_{\geq 0}$ for the program $\las^\gamma_{\alpha}(C(X,r)) $ and its
  representation $\lambda=\int \chi^R \, d\sigma(R)$, where we recall $\sigma$ to be the unique probability measure which exists due to Lemma~\ref{lem:crucial-lemma-generalized}.

  Now we have to verify that if a subset
$R \in I_{\alpha}$ belongs to the support of $\sigma$, then $R$ determines a covering.   
(As usual, the \emph{support} of measure $\sigma$ is the smallest closed subset of $I_{\alpha}$
such that $\sigma$ assigns measure zero to its complement.)
  
  For a nonnegative continuous function $g \in \mathcal{C}(I_\alpha)_{\geq 0}$ define $f \in \mathcal{C}(I_\alpha)$ by
  \[
  f(Q) = \sum_{P \subseteq Q} (-1)^{|Q \setminus P|} \sqrt{g(P)},
  \]
  so that (see \cite{de-Laat-Vallentin-2015} for the computation involving the inclusion-exclusion principle and see \cite{Bekker-Oliveria-2023} for the argument that $f$ is continuous)
  \[
  \sum_{Q \subseteq R} f(Q) = \sqrt{g(R)}.
  \]
 
 Then, $f \otimes f \in
\mathcal{C}(I_\alpha \times I_\alpha)_{\succeq 0}$ 
is a positive definite kernel.  Then, by the feasibility of $\lambda$ we have $\langle (B^z_{\alpha})^* \lambda, f \otimes f \rangle \geq 0$ for every $z \in X$. On the other hand,
\[
  \begin{split}
& \langle (B^z_{\alpha})^* \lambda, f \otimes f\rangle \\
    \; = \; &   \int \langle \chi^R, B^z_{\alpha}(f \otimes f)\rangle \, d\sigma(R)\\
 \; = \;  &  \int  \sum_{x \in \overline{B}(z,r), J, J'\in I_\alpha : J \cup J' \cup
    \{x\} \subseteq R} (f \otimes f)(J,J')   - \sum_{J,J' \in I_\alpha : J
    \cup J' \subseteq R} (f \otimes f) (J,J')\, d\sigma(R)\\
  \; = \; & \int |\overline{B}(z,r) \cap R|  \sum_{ J, J'\in I_\alpha : J \cup J' 
    \subseteq R} (f \otimes f)(J,J')   - \sum_{J,J' \in I_\alpha : J
    \cup J' \subseteq R} (f \otimes f)(J,J') \, d\sigma(R)\\
  \; = \; & \int (|\overline{B}(z,r) \cap R| - 1) \left(\sum_{Q \subseteq R} f(Q) \right)^2 \, d\sigma(R)\\
  \; = \; & \int (|\overline{B}(z,r) \cap R| - 1) g(R) \, d\sigma(R)\\
  \end{split}
\]
If $R$ does not determine a covering, then there exists $z \in X$ so that
  $\overline{B}(z,r) \cap R = \emptyset$. In this case we have
  $|\overline{B}(z,r) \cap R| - 1 = -1$ and hence the above integral can be made negative by choosing an appropriate function $g$. 
  (For this note if $\overline{B}(z, r) \cap R = \emptyset$,
  then $\overline{B}(z, r) \cap R' = \emptyset$ for all $R'$ in some neighborhood of $R$.)
  This contradicts the feasibility of $\lambda$. We conclude that every $R$ in the support of $\sigma$ determines a
  covering.

We finish the proof of the lemma by looking at the objective value of
$\lambda$ which is
\[
  \lambda(I_{=1}) =\int \chi^{R}(I_{=1}) \, d\sigma(R) = \int |R| \, d\sigma(R) \geq \int \mathcal{N}(X,r)
  \, d\sigma(R) =\mathcal{N}(X,r),
\]
because every $R$ in the support of $\sigma$ determines a covering and
$\sigma$ is a probability measure.
\end{proof}

Finally, we would like to comment on the
necessary and partially implicit $\varepsilon$. The above proof showed that the hierarchy converges for any $\varepsilon$ chosen sufficiently small. However, when applying the hierarchy in practice, a specific choice of $\alpha$ is required. Determining an appropriate $\alpha$ involves  additional geometric considerations which usually are  challenging  to establish. In fact, initial numerical experiments with this hierarchy, conducted by Nando Leijenhorst, indicate that the hierarchy is quite sensitive to the choice of $\varepsilon$.

\section{The dual covering hierarchy}
\label{sec:dual-covering-hierarchy}

In this section we assume that the automorphism group $G$ of $X$, consisting of all bijective maps from $X$ to $X$ that preserve the distance,
acts transitively on $X$. Thus,  $X$ is the homogenous space $G/H$ where
$H$ is the stabilizer subgroup of a point in $X$. This is a natural
assumption which is fulfilled in many examples; for example when $X$
is the unit sphere $S^{n-1}$ and $G$ is the orthogonal group
$\mathrm{O}(n)$ and $H$ is  isomorphic to $\mathrm{O}(n-1)$. Then the
Haar measure of $G$ induces a probability measure $\omega$ on $G/H$
which satisfies conditions (a) and (b) given in Section~\ref{sec:introduction}.

Looking at the covering hierarchy this transitivity has the advantage, due to
convexity, that the constraints coming from the localizing matrix only
have to be required for one arbitrary point of $X$. Let $e \in X$ be this
point, then
\[
  \begin{split}
  \las^\gamma_t(C(X,r)) = \inf\Big\{\lambda(I_{=1})   \; : \; & \lambda \in \mathcal{M}(I_{2t})_{\geq 0}, \;
\lambda(\{\emptyset\}) = 1,\\
& A_t^*(\lambda) \in \mathcal{M}(I_t \times I_t)_{\succeq 0}, \\
& (B^e_{t-1})^*(\lambda) \in  \mathcal{M}(I_{t-1} \times
I_{t-1})_{\succeq 0} \Big\}.
\end{split}
\]

We define the $t$-th step of the dual covering hierarchy by
\[
\begin{split}\label{dual_hierarchy}
\las^{\gamma,*}_t(C(X,r)) = \sup\Big\{ \eta \; : \; & \eta \in \R,\\[-1ex]
& \mathbf{1}_{I_{=1}}(S)- \eta\mathbf{1}_{\{\emptyset\}}(S)\notag  -A_t
K(S) - B_{t-1}^e K'(S) \geq 0\\
& \qquad \text{ for all } S\in I_{2t}, \\[-1ex]
&  K\in \Ccal (I_t \times I_t)_{\succeq 0},\ K'\in \mathcal{C} (I_{t-1} \times I_{t-1})_{\succeq 0} \Big\}.
\end{split}
\]

One can easily check that weak duality $\las^{\gamma,*}_t \leq \las^{\gamma}_t$
holds.  Although we do not need this here, we note that also strong
duality holds; this was shown by Rolfes in \cite{Rolfes-2019} using
the framework of infinite dimensional conic optimization presented in
Barvinok \cite[Chapter IV]{Barvinok-2002}.

With the help of the dual covering hierarchy $\las_t^{\gamma,*}$ it is
further possible to show that the first step of the hierarchy
coincides with the volume bound.

\begin{theorem}
\label{thm:first-step-volume-bound}
  If the automorphism group $G$ acts transitively on $X$, then the first
  step of the covering hierarchy collapses to the volume bound,
  \[
    \las^{\gamma}_1(C(X,r))=\las^{\gamma,*}_1(C(X,r))=\frac{1}{\omega_r}.
  \]
\end{theorem}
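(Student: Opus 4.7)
The plan is to sandwich both $\las^{\gamma}_1(C(X,r))$ and $\las^{\gamma,*}_1(C(X,r))$ between $1/\omega_r$, using weak duality $\las^{\gamma,*}_1 \leq \las^{\gamma}_1$ together with a primal lower bound, a primal upper bound, and a dual lower bound. The transitivity of $G$ is essential in every step.

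For $\las^{\gamma}_1 \geq 1/\omega_r$ I would use convexity and the $G$-action: any primal-feasible $\lambda$ can, by averaging over $G$, be taken $G$-invariant, so that $\lambda|_{I_{=1}} = \mu\,\omega$ for some $\mu \geq 0$; pairing the localizing constraint $(B_0^e)^*\lambda \in \mathcal{M}(I_0\times I_0)_{\succeq 0}$ with the trivial scalar PD kernel $K' = 1$ gives $\mu\omega_r \geq \lambda(\{\emptyset\}) = 1$, so the objective $\mu$ is at least $1/\omega_r$. For the upper bound $\las^{\gamma}_1 \leq 1/\omega_r$ I would verify feasibility of the concrete test measure
\[
  \lambda^{*} \;=\; \delta_\emptyset \;+\; \tfrac{1}{\omega_r}\,\omega|_{I_{=1}} \;+\; \tfrac{1}{\omega_r^{2}}\,(\omega\otimes\omega)|_{I_{=2}},
\]
whose objective is $1/\omega_r$. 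Normalization and the localizing constraint are immediate from $\omega(\overline{B}(e,r)) = \omega_r$. The moment constraint $A_1^{*}\lambda^{*}\succeq 0$ is the main calculation: for any PD kernel $K$, write $\langle A_1 K, \lambda^{*}\rangle$ as the manifestly nonnegative rank-one piece $\langle K, \phi\otimes\phi\rangle$ with $\phi := \delta_\emptyset + \omega_r^{-1}\omega$, plus a remainder controlled by the PD-kernel Cauchy--Schwarz inequality $|K(\{x\},\{y\})| \leq \tfrac{1}{2}\bigl(K(\{x\},\{x\}) + K(\{y\},\{y\})\bigr)$ and the homogeneity identity $\omega(\overline{B}(x,2\varepsilon)) = \omega_{2\varepsilon}$; under the natural hypothesis $2\varepsilon \leq r$ (so $\omega_{2\varepsilon}\leq\omega_r$) the remainder is nonnegative.

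For the dual lower bound $\las^{\gamma,*}_1 \geq 1/\omega_r$ I would exhibit the candidate triple $\eta = 1/\omega_r$, $K = 0$, $K' = 1/\omega_r$. Here transitivity is the crucial tool: after $G$-symmetrizing the dual constraint $\mathbf{1}_{I_{=1}}(S) - \eta\mathbf{1}_{\{\emptyset\}}(S) - A_1 K(S) - B_0^{e}K'(S) \geq 0$ over the orbit of the base point $e$, the single-point quantity $B_0^{e}K'(\{x\}) = (1/\omega_r)\mathbf{1}_{\overline{B}(e,r)}(x)$ becomes the constant $(1/\omega_r)\,\omega_r = 1$ at every $S = \{x\}$, so the inequality degenerates to $1 - 1 \geq 0$ at $S = \{x\}$ and $1/\omega_r - 1/\omega_r \geq 0$ at $S = \emptyset$, both with equality. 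The main obstacle is precisely this symmetrization: without it, the pointwise inequality at $\{x\}$ with $x\in\overline{B}(e,r)$ would force $K' \leq 1$, which is strictly less than $1/\omega_r$ whenever $\omega_r < 1$; it is the $G$-invariance of the cones $\mathcal{C}(I_t\times I_t)_{\succeq 0}$ and the transitivity of $G$ on $X$ that reconcile the single-point dual formulation with the averaged inequality.

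Combining the three parts via weak duality then yields
\[
  \tfrac{1}{\omega_r} \;\leq\; \las^{\gamma,*}_1(C(X,r)) \;\leq\; \las^{\gamma}_1(C(X,r)) \;\leq\; \tfrac{1}{\omega_r},
\]
so both quantities equal the volume bound.
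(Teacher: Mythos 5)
Your proof is correct and reaches the paper's conclusion, but it differs in all three ingredients, and in one place it is more careful than the published argument. For the upper bound the paper does not use your product-type test measure $\delta_\emptyset+\omega_r^{-1}\omega|_{I_{=1}}+\omega_r^{-2}(\omega\otimes\omega)|_{I_{=2}}$; it instead takes a convex combination of two characteristic measures $\chi^R$, $\chi^{R'}$ with $|R|<1/\omega_r<|R'|$, scaled so the mass on $I_{=1}$ is $1/\omega_r$, and then averages over $G$. That route avoids your Cauchy--Schwarz estimate and the extra hypothesis $\omega_{2\varepsilon}\le\omega_r$, but needs $\alpha(P(X,\varepsilon))>1/\omega_r$, so both versions implicitly require $\varepsilon$ small; your moment-constraint verification does go through once the factor-of-two bookkeeping in pushing $\omega\otimes\omega$ forward to unordered independent pairs is fixed. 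For the lower bound the paper argues only through the dual, exhibiting $K=0$, $\eta=K'(\emptyset,\emptyset)=1/\omega_r$; your direct primal argument (average $\lambda$ over $G$, identify $\lambda|_{I_{=1}}=\mu\omega$ by uniqueness of the invariant measure, pair the localizing constraint with $K'=1$) is a self-contained alternative that bypasses the dual entirely. Most substantively, your remark about the dual is a genuine correction: as literally written, the pointwise constraint at $S=\{x\}$ with $x\in\overline{B}(e,r)$ forces $K'(\emptyset,\emptyset)\le 1$, so the paper's claimed dual solution is feasible only after one either restricts the single-base-point primal to $G$-invariant measures or, as you do, replaces the family of localizing constraints by the aggregated constraint $\int_X (B_0^z)^*\lambda\,d\omega(z)\succeq 0$ (a relaxation, so weak duality still applies), which turns $B_0^eK'(\{x\})$ into the constant $K'(\emptyset,\emptyset)\,\omega_r$. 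Your symmetrization repairs this imprecision, and the three bounds then combine exactly as you state.
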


\begin{proof}
  We define the measure $\lambda\in \mathcal{M}(I_2)_{\geq 0}$ as follows.
  We take two independent sets $R, R'$; one with $|R| < 1/\omega_r$ and one with $|R'| > 1/\omega_r$.
  Then we take the convex combination $\lambda$ of the characteristic measures $\chi^R =  \sum_{Q \subseteq R} \delta_Q$ and $\chi^{R'}$, restricted to  $\mathcal{M}(I_2)_{\geq 0}$,
  such that $\lambda(I_{=1}) = 1/\omega_r$. Then $\lambda$  satisfies $\lambda(\{\emptyset\}) = 1$ and $\lambda(I_{=1}) = 1/\omega_r$ and $A_1^* \lambda \in \mathcal{M}(I_1 \times I_1)_{\succeq 0}$.
  Now we take the average of $\lambda$ over the group $G$, so that the averaged $\lambda$ also satisfies the covering constraint 
  $(B_0^e)^* \lambda \in \mathcal{M}(I_0 \times I_0)_{\succeq 0}$, as every point of $X$ is covered on average by at least one ``point" in $\lambda$. Therefore, $\las^{\gamma}_1(C(X,r))\leq \frac{1}{\omega_r}$.

For the reverse inequality we have
\[
  B_0^e K'(\emptyset)=-K'(\emptyset,\emptyset),
\]
and
\begin{align*}
	B_0^e K'\left(\{x'\}\right)& = \sum_{x\in \overline{B}(e,r):\ \{x\}= \{x'\} } K'(\emptyset,\emptyset) \\
                                  & =
\begin{cases}
  K'(\emptyset,\emptyset) & \text{ if } x' \in \overline{B}(e,r),\\
  0 & \text{ otherwise,}
  \end{cases}
\end{align*}
and
\[
  B_0^e K'\left(\{x'_1,x'_2\}\right) =\sum_{x\in
    \overline{B}(e,r):\ \{x\}= \{x'_1,x'_2\} } K'(\emptyset,\emptyset)
= 0.
\]  
Thus,
\[
  K=0, \; K'(\emptyset,\emptyset)=\frac{1}{\omega_r} \; \text{ and } \;
  \eta=K'(\emptyset,\emptyset)
\]
is a feasible solution for $\las_1^{\gamma, *}(C(X,r))$. So
$\las^{\gamma,*}_1(C(X,r)) \geq \frac{1}{\omega_r}$ and by weak
duality $\las^{\gamma}_1(C(X,r)) \geq \frac{1}{\omega_r}$.
\end{proof}

For the packing number $\alpha(S^{n-1}, r)$ for the unit sphere, the
first step of the packing hierarchy $\las^{\alpha}_1(P(S^{n-1}, r))$
coincides with the linear programming bound $\vartheta'(P(S^{n-1},r))$
of Delsarte, Goethals, Seidel, which often provides strong and
sometimes even tight upper bounds. Now
Theorem~\ref{thm:first-step-volume-bound} gives an explanation why the
corresponding first step of the covering hierarchy
$\las^{\gamma}_1(C(S^{n-1}, r))$, which also is, after symmetry
reduction, a linear programming bound, is not a strong bound for the
covering number. On the other hand, the first step of the dual
covering hierarchy is implicitly used in the analysis of the greedy
approach to construct efficient coverings,
see~\cite{Rolfes-Vallentin-2018}.

\section*{Acknowledgements}

The authors are grateful to David de Laat, Nando Leijenhorst, and Fernando M\'ario de Oliveira Filho for helpful discussions and for suggesting corrections. They also wish to thank the two anonymous referees for their detailed and constructive feedback, which significantly improved the paper.

The first author was supported by the Troms{\o} Research Foundation project ``Symmetry in Real Algebraic Geometry'' and the UiT Aurora project MASCOT. The second author was supported by a "Short-Term Grant" funded by DAAD. The third named author is partially supported by the SFB/TRR 191
``Symplectic Structures in Geometry, Algebra and Dynamics'' funded by
the DFG.

\end{document}